\theoremstyle{definition}
\newtheorem{Def}{Definition}
\newtheorem{Thm}{Theorem}
\newtheorem{Lem}{Lemma}
\begin{document}
\title[Products of cobordism classes of Morse functions]{On defining products of cobordism classes of Morse functions}

\author{Naoki Kitazawa}
\keywords{Singularities of differentiable maps; generic maps. Differential topology. Reeb spaces}
\subjclass[2010]{Primary~57R45. Secondary~57N15.}
\address{Institute of Mathematics for Industry, Kyushu University, 744 Motooka, Nishi-ku Fukuoka 819-0395, Japan}
\email{n-kitazawa.imi@kyushu-u.ac.jp}
\maketitle
\begin{abstract}
(Co)bordisms of manifolds and maps are fundamental and important objects in algebraic and differential topology of manifolds and related studies were started by Thom etc.. Cobordisms of Morse functions were introduced and have been studied as a branch of the algebraic and differential topological theory of Morse functions and their higher dimensional versions, or the global singularity theory, by Kalm\'{a}r, Ikegami, Sadykov, Saeki, Wrazidlo, Yamamoto etc. since 2000s.

Cobordism relations are in most cases defined as the following for example; two closed manifolds of a fixed dimension or maps on them into a fixed space are said to be {\it cobordant} if the disjoint union is the boundary of a compact manifold or the restriction of a map satisfying suitable conditions on the manifold into the product of the target space and the closed interval. Such relations induce structures of modules consisting of all obtained equivalence classes such that the sums are defined by procedures of taking disjoint unions and in the cases of manifolds, ring structures such that the products are defined by the procedures of taking products, are also introduced. 

In this paper, as a new algebraic topological study, we try to define a product of two cobordism classes of Morse functions and show that a natural method fails, by presenting explicit examples which are regarded as obstructions.

\end{abstract}

\section{Introduction and Preliminaries}
\label{sec:1}
(Co)bordisms of manifolds and maps are fundamental and important objects in algebraic and differential topology of manifolds. In \cite{atiyah}, \cite{stong}, \cite{milnor} etc., fundamental and advanced theory on (co)bordisms are explained and to know precisely, see them. 

In this paper, cobordisms of Morse functions, studied in \cite{ikegami}, \cite{ikegamisaeki}, \cite{saeki3} etc. first and later in \cite{kalmar}, \cite{saekiyamamoto}, \cite{wrazidlo2} etc., are fundamental objects, They are introduced and studied in the context of the algebraic and differential topological theory of Morse functions and their higher dimensional versions, or the global singularity theory. 

Note that before such studies, as good generic maps having no singular points, cobordisms of smooth maps such as immersions and embeddings and versions for maps having singular points have been studied in \cite{wall}, \cite{wells} and later by \cite{rimanyiszucs} etc. and the methods are algebraic and differential topological as the classical studies of cobordisms of manifolds in the beginning of this section. Note also that most of the studies of cobordisms of Morse functions, their higher dimensional versions etc. just before are geometric topological, combinatoric or based on explicit singularity theory such as eliminations of singular points of a type called {\it cusp} (\cite{levine}).

For the algebraic and differential topological theory of Morse functions and their higher dimensional versions, {\it fold} maps are fundamental and important. A {\it fold} map from an $m$-dimensional manifold wth no boundary into an $n$-dimensional manifold with no boundary satisfying the relation $m \geq n \geq 1$ is a smooth map such that the form at each singular point $p$ is 
$$(x_1, \cdots, x_m) \mapsto (x_1,\cdots,x_{n-1},\sum_{k=n}^{m-i_(p)}{x_k}^2-\sum_{k=m-i_(p)+1}^{m}{x_k}^2)$$
for an integer $0 \leq i(p) \leq \frac{m-n+1}{2}$.
As a fundamental property, $i(p)$ is unique. In addition, the set of all singular point of a fixed $i(p)$ is a smooth closed submanifold of dimension $n-1$ and the map obtained by the restriction of the original map to the submanifold is a smooth immersion. Note that a Morse function is a fold map, that at a singular point $p$ of a Morse function $f$, the function is of the form $$(x_1, \cdots, x_m) \mapsto \sum_{k=n}^{m-i_(p)}{x_k}^2-\sum_{k=m-i_(p)+1}^{m}{x_k}^2+f(p)$$ for a uniquely detrmined integer $0 \leq i(p) \leq m$ respecting the orientation of the target space $\mathbb{R}$ (we call it the index of $p$) and the singular points appear discretely. As a well-known fact, Morse functions such that at distinct singular points, the values are distinct, exist densely.   

For fundamental and advanced singularity theory and differential topological properties of Morse functions, fold maps and more general generic maps such as generic smooth maps on manifolds whose dimensions are larger than $1$ into the plane with finite numbers of singular points of the cusp type before (and other singular points are of the type of a singular point of a fold map), see \cite{golubitskyguillemin} and \cite{saeki} for example.

We review a {\it fold cobordism} of Morse functions, introduced and studied in \cite{ikegamisaeki}, \cite{saeki3} etc..
\begin{Def}
\label{def:1}
Two Morse functions $f_1:M_1 \rightarrow \mathbb{R}$ and $f_2:M_2 \rightarrow \mathbb{R}$ on (resp. oriented) closed manifolds of dimension $m>1$  are said to be (resp. {\it oriented}) {\it fold cobordant}.
\begin{enumerate}
\item $M_1$ and $M_2$ is (resp. oriented) cobordant and there exists a smooth (resp. oriented) compact manifold $W$ satisfying $\partial W= M_1 \sqcup M_2$.
\item There exists a smooth map $F:W \rightarrow \mathbb{R} \times [0,1]$ such that the following hold.
\begin{enumerate}
\item The relations $F(\partial W) \subset \mathbb{R} \times \{0,1\}$ and $F({\rm Int} W) \subset \mathbb{R} \times (0,1)$ hold.
\item ${F_i} {\mid}_{M_i}:M_i \rightarrow \mathbb{R} \times \{i-1\} \subset \mathbb{R} \times [0,1]$ holds.
\item For the projection to the first component ${\rm pr}$, the relation $f_i ={\rm pr} \circ {F_i} {\mid}_{M_i}$ holds.
\end{enumerate}
\item $F {\mid}_{{\rm Int} W}:{\rm Int W} \rightarrow \mathbb{R} \times (0,1)$ is a fold map.
\end{enumerate}
\end{Def}

For a closed manifold $M$ of dimension $m \geq 1$ and a Morse function $f$, we denote the number of singular points of index $j$ by $C_j(f)$ and set ${\phi}_j(f):=C_j(f)-C_{m-j}(f)$.
 Let ${\mathcal{N}}_m$ (${\Omega}_m$) be the smooth $m$-dimensional (resp. oriented) cobordism group of closed manifolds and for the (oriented) manifold $M$, we denote the (resp. oriented) cobordism class represented by $M$ by $[M] \in {\mathcal{N}}_m$ (resp. $[M] \in {\Omega}_m$). 
In addition, Definition \ref{def:1} gives a natural commutative group structure on the set of all equivalence classes of functions on closed (oriented) manifolds; the sum is defined by the produce of taking the disjoint union of two functions. We denote the group by ${\mathcal{N}}_{\mathcal{M},m}$ (resp. ${\Omega}_{\mathcal{M},m}$) and call it the {\it $m$-dimensional} (resp. {\it oriented}) {\it cobordism groups of Morse functions}.

For an integer $a$, let $\lfloor a \rfloor$ be the maximal integer not larger than $a$. The following have been shown.

\begin{Thm}[\cite{ikegami}, \cite{kalmar} etc.]
\label{thm:1}
There exists a well-defined isomorphism from ${\mathcal{N}}_{\mathcal{M},m}$ onto ${\mathcal{N}}_m \oplus {\oplus}_{j=\lfloor \frac{m+3}{2} \rfloor}^{m} \mathbb{Z}$ defined by the direct sum of the map corresponding the cobordism class represented by the source manifold and ${\oplus}_{j=\lfloor \frac{m+3}{2} \rfloor}^{m} {\phi}_j(f)$ where $f$ represents a Morse function representing an element of ${\mathcal{N}}_{\mathcal{M},m}$.
\end{Thm}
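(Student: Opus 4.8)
The plan is to show that the assignment $[f]\mapsto([M],(\phi_j(f))_j)$ is a well-defined isomorphism by checking, in turn, that it is an additive homomorphism, that it is injective, and that it is surjective. Additivity is immediate: both $M\mapsto[M]$ and each $C_j$, hence each $\phi_j$, are additive under disjoint union, so the candidate map respects the group operation on $\mathcal{N}_{\mathcal{M},m}$. It remains to check that the map is well defined on fold cobordism classes, i.e. that the listed quantities are invariants of the relation of Definition \ref{def:1}. That $[M]\in\mathcal{N}_m$ is invariant is built into the definition, since the manifold $W$ of Definition \ref{def:1} is exactly a cobordism between $M_1$ and $M_2$. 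The whole difficulty is therefore concentrated in the invariance of the integers $\phi_j$.

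For this I would analyze the singular set $S(F)$ of the fold map $F$ from Definition \ref{def:1}. Writing $S(f_i)$ for the set of critical points of $f_i$, the set $S(F)$ is a compact $1$-manifold properly embedded in $W$ with $\partial S(F)=S(f_1)\sqcup S(f_2)$. The absolute index $i(p)$ of the fold is locally constant on $S(F)$, hence constant on each component, and a boundary endpoint lying on an absolute-index-$i$ component is a critical point whose index is either $i$ or $m-i$; so each $i<m/2$ governs precisely the single difference $\phi_{m-i}=C_{m-i}-C_i$. The heart of the argument is to \emph{co-orient} the union of absolute-index-$i$ components so that, along any arc, an index-$(m-i)$ endpoint and an index-$i$ endpoint receive opposite incidence signs. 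Granting such a compatible co-orientation, the vanishing of the signed boundary of this oriented $1$-manifold reads $[C_{m-i}(f_1)-C_i(f_1)]-[C_{m-i}(f_2)-C_i(f_2)]=0$, where the relative sign between $M_1$ and $M_2$ arises from their sitting at the two ends $\{0\}$ and $\{1\}$ of $[0,1]$; this is exactly $\phi_{m-i}(f_1)=\phi_{m-i}(f_2)$, proved for every upper index $m-i>m/2$.

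The range $\lfloor(m+3)/2\rfloor\le j\le m$ is then accounted for by bookkeeping. The relation $\phi_{m-j}=-\phi_j$ makes the lower indices redundant, and $\phi_{m/2}=0$ when $m$ is even, so only the upper indices carry information; when $m$ is odd the identity $\chi(M)=\sum_j(-1)^jC_j=0$ becomes the single linear relation $\sum_{j>m/2}(-1)^j\phi_j=0$, which expresses $\phi_{(m+1)/2}$ through the $\phi_j$ with $j\ge(m+3)/2$ and removes exactly that coordinate. For surjectivity I would realize an arbitrary $([N],(n_j))$ by taking any Morse function on a representative of $[N]$ to hit the $\mathcal{N}_m$-coordinate and then adjusting the free coordinates by disjoint union with explicit generating Morse functions (e.g. on products of spheres) realizing a basis of the $\phi$-vectors, chosen on null-cobordant source manifolds so that the $\mathcal{N}_m$-coordinate is unaffected. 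For injectivity I would assume $[M]=0$ and $\phi_j(f)=0$ for all $j$ in the range, hence for all $j$; using $M=\partial V$, I would extend $f$ to a fold map on $V$ and cancel its critical points in pairs, the only obstructions being the vanishing classes $[M]$ and the vanishing differences $\phi_j$, so that $f$ is fold null-cobordant.

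The step I expect to be genuinely hard is producing the compatible co-orientation of the absolute-index-$i$ strata of $S(F)$ and proving that it matches the index type of the endpoints, since this is precisely what upgrades the easy mod-$2$ count of boundary points of a $1$-manifold to the integral equalities $\phi_j(f_1)=\phi_j(f_2)$; the case $2i=m$ must be excluded exactly because there the two eigenspaces of the fold have equal dimension and no such co-orientation exists, consistently with $\phi_{m/2}=0$. The cancellation of critical points in the injectivity step is the second delicate point, since the eliminations must be performed within the class of fold maps, that is, without introducing cusps.
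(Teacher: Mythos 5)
The paper itself offers no proof of Theorem \ref{thm:1}: it is quoted from \cite{ikegami} and \cite{kalmar}, so the only possible comparison is with the arguments of those references. Measured against these, your outline follows the standard route. The invariance of the $\phi_j$ via the singular set $S(F)$ of the fold cobordism of Definition \ref{def:1} --- a compact $1$-manifold with $\partial S(F)=S(f_1)\sqcup S(f_2)$, locally constant absolute index, and a co-orientation upgrading the mod $2$ boundary count to an integral identity --- is exactly the mechanism used in \cite{ikegamisaeki} and \cite{ikegami}, and your bookkeeping for the index range (the relations $\phi_{m-j}=-\phi_j$ and $\phi_{m/2}=0$, plus, for odd $m$, the Euler characteristic relation $\sum_{j>m/2}(-1)^j\phi_j=0$ which removes the coordinate $j=\frac{m+1}{2}$) correctly accounts for the lower limit $\lfloor\frac{m+3}{2}\rfloor$. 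The co-orientation step you flag as the hard point is in fact manageable: the image of each arc of $S(F)$ is an immersed arc in $\mathbb{R}\times[0,1]$ meeting $\mathbb{R}\times\{0,1\}$ in vertical segments, and since the orientation class of the pair (tangent vector oriented from one endpoint to the other, continuously chosen normal vector) is constant along the arc, an arc with both ends on the same level ends with opposite horizontal co-orientations and contributes one index-$i$ and one index-$(m-i)$ critical point, while an arc joining the two levels contributes critical points of equal index to $f_1$ and to $f_2$; summing over arcs gives $\phi_{m-i}(f_1)=\phi_{m-i}(f_2)$.

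The genuine gap is the injectivity step. The sentence ``extend $f$ to a fold map on $V$ and cancel its critical points in pairs, the only obstructions being $[M]$ and the $\phi_j$'' compresses what is the bulk of \cite{ikegami}: a generic extension of $f$ to a null-cobordism $V$ is a map into $\mathbb{R}\times[0,1]$ with cusps, and one must first eliminate the cusps by Levine's theorem \cite{levine} (which requires pairing them up and is where the hypotheses actually do work) and then rearrange and cancel fold arcs while keeping the boundary behaviour fixed; asserting that the vanishing of $[M]$ and of the $\phi_j$ are ``the only obstructions'' is precisely the content of the theorem and cannot be taken as an input. Surjectivity likewise needs the explicit generators to be exhibited: cancelling handle pairs of indices $(j,j+1)$ on a null-cobordant source realize the vectors $e_j+e_{j+1}$, and one must check that these span the lattice $\bigoplus_{j\geq\lfloor\frac{m+3}{2}\rfloor}\mathbb{Z}$; this is routine but not yet done. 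In short, your proposal is an accurate blueprint of the known proof, complete for additivity and well-definedness, but the isomorphism claim (above all injectivity) still rests on the unproved cancellation step.
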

\begin{Thm}[\cite{ikegami}, \cite{ikegamisaeki} etc.]
\label{thm:2}
Let $m$ be an integer represented by $m=4k+a$ where $k \geq 0$ is an integer and $a=0,1,2,3$.

Let $a=0,2,3$. There exists a well-defined isomorphism from ${\Omega}_{\mathcal{M},m}$ onto ${\mathcal{N}}_m \oplus {\oplus}_{j=\lfloor \frac{m+3}{2} \rfloor}^{m} \mathbb{Z}$ defined by the direct sum of the map corresponding the oriented cobordism class represented by the source manifold and ${\oplus}_{j=\lfloor \frac{m+3}{2} \rfloor}^{m} {\phi}_j(f)$ where $f$ represents a Morse function representing an element of ${\Omega}_{\mathcal{M},m}$.

In the case where $a=1$, for a closed and oriented manifold $M$, let $\sigma(M)={\sum}_{j=0}^{2k} {\rm rank } H_j(M;\mathbb{Q})$. Then, there exists a well-defined isomorphism from ${\Omega}_{\mathcal{M},m}$ onto ${\mathcal{N}}_m \oplus {\oplus}_{j=\lfloor \frac{m+3}{2} \rfloor}^{m} \mathbb{Z}$ defined by the direct sum of the map corresponding the oriented cobordism class represented by the source manifold, ${\oplus}_{j=\lfloor \frac{m+3}{2} \rfloor}^{m} {\phi}_j(f)$ and the map corresponding $\sigma(M)-{\oplus}_{j=0}^{2k} {\phi}_j(f) \in \mathbb{Z}/2\mathbb{Z}$ where $f$ represents a Morse function on a closed manifold $M$ representing an element of ${\Omega}_{\mathcal{M},m}$.
\end{Thm}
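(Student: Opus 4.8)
The plan is to exhibit a candidate map
$$\Phi\colon {\Omega}_{\mathcal{M},m}\longrightarrow {\Omega}_m\oplus{\bigoplus}_{j=\lfloor\frac{m+3}{2}\rfloor}^{m}\mathbb{Z}$$
(together with an extra factor $\mathbb{Z}/2\mathbb{Z}$ when $a=1$), sending the class of a Morse function $f\colon M\to\mathbb{R}$ to the data consisting of the oriented cobordism class $[M]\in{\Omega}_m$, the tuple $({\phi}_j(f))_j$, and, in the case $a=1$, the residue $\sigma(M)-{\sum}_{j=0}^{2k}{\phi}_j(f)\bmod 2$; and then to prove that $\Phi$ is a well-defined isomorphism by checking invariance, surjectivity, and injectivity in turn. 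Additivity of each coordinate under disjoint union is immediate, so $\Phi$ is a homomorphism once it is well defined.

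For well-definedness I would argue as follows. That $[M]\in{\Omega}_m$ is invariant is immediate from the first condition in Definition~\ref{def:1}, since the cobounding $W$ is by hypothesis an oriented cobordism of the source manifolds. For the invariance of the numbers ${\phi}_j(f)$ I would simply observe that an oriented fold cobordism is in particular a fold cobordism in the sense underlying Theorem~\ref{thm:1}; hence the well-definedness already granted there forces ${\phi}_j$ to be constant on oriented fold cobordism classes, with no extra work. The genuinely new point is the invariance of the $\mathbb{Z}/2\mathbb{Z}$-coordinate for $a=1$: here $\sigma(M)\bmod 2$ is (a form of) the rational Kervaire semicharacteristic of the $(4k+1)$-manifold $M$, which is \emph{not} an oriented cobordism invariant of $M$ alone, so its invariance must be extracted from the fold map $F$ itself. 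I would prove this by analysing the one-dimensional fold singular set of $F\colon W\to\mathbb{R}\times[0,1]$, splitting it by absolute index, and establishing an index-counting parity formula relating $\sigma(M_1)+\sigma(M_2)$ to the middle-index fold data of $F$ and to ${\sum}_{j\le 2k}{\phi}_j$, which pins down the stated combination modulo $2$.

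Surjectivity I expect to be routine. Any prescribed $[N]\in{\Omega}_m$ is hit by choosing a Morse function on a representative of $[N]$. The free coordinates are then adjusted, without disturbing the oriented cobordism class of the source manifold, by creating and cancelling pairs of critical points (births and deaths) at indices away from the middle, or equivalently by forming oriented connected sums with standard Morse functions on spheres or sphere products carrying exactly the desired index data; each such modification changes a chosen ${\phi}_j$ in the upper range in a controlled way. For $a=1$ the extra $\mathbb{Z}/2\mathbb{Z}$ generator is realised concretely by the standard height function on $S^{4k+1}$, for which $[M]=0$ and every ${\phi}_j=0$ while $\sigma(S^{4k+1})=1$, so that its residue equals the nonzero class; this simultaneously shows the extra invariant does not vanish identically.

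The crux, and the step I expect to be the main obstacle, is injectivity: if $[M]=0$ in ${\Omega}_m$, all upper ${\phi}_j(f)$ vanish, and (for $a=1$) the residue is $0$, then $f$ must be shown oriented fold null-cobordant. The plan is to fix an oriented $W_0$ with $\partial W_0=M$, extend $(f,0)$ to a generic smooth map $F_0\colon W_0\to\mathbb{R}\times[0,1]$, remove its cusps by Levine's cusp-elimination to obtain a fold map, and then eliminate the remaining fold singularities over the interior so that $F_0$ becomes a fold null-cobordism of $f$. The obstructions to such an elimination are exactly the invariants we have assumed to vanish: matching numbers of Poincar\'e-dual indefinite fold components can be paired and cancelled precisely because each upper ${\phi}_j$ is zero, while the most delicate part is the cancellation of the middle-index folds occurring when $m$ is odd. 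For $a=0,2,3$ this middle cancellation can be carried out directly, but for $m=4k+1$ there remains a single mod-$2$ obstruction that coincides with the semicharacteristic residue, and the assumed vanishing of that residue is what allows the final pair of middle folds to be removed. Controlling the oriented fold singular set over $W_0$ through these cancellations, and verifying that the semicharacteristic residue is the only surviving obstruction in the $4k+1$ case, is where the real work lies.
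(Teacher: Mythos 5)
First, a point of calibration: the paper gives no proof of Theorem \ref{thm:2} at all --- it is quoted from \cite{ikegami} and \cite{ikegamisaeki} --- so there is no in-paper argument to compare yours against; the comparison has to be with those references. Your outline does follow their strategy: define the candidate homomorphism, check invariance of each coordinate under the cobordism relation of Definition \ref{def:1}, realize generators for surjectivity, and prove injectivity by constructing a null-cobordism and eliminating cusps (\cite{levine}) and then folds. Two of your observations are genuinely efficient: deducing invariance of the upper ${\phi}_j$ by forgetting orientations and invoking Theorem \ref{thm:1}, and exhibiting the height function on $S^{4k+1}$ as an explicit element mapping to $(0,\dots,0,1)$, which both realizes the $\mathbb{Z}/2\mathbb{Z}$ generator and shows that summand is not redundant. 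You also silently corrected two slips in the statement as printed: the first summand of the target should be ${\Omega}_m$, not ${\mathcal{N}}_m$, and in the case $a=1$ the displayed target group omits the $\mathbb{Z}/2\mathbb{Z}$ factor in which the described third coordinate lives.

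As a proof, however, the proposal has two acknowledged but genuine gaps, and they are exactly where the cited theorem is nontrivial. (i) The invariance of $\sigma(M)-{\sum}_{j=0}^{2k}{\phi}_j(f) \bmod 2$ is asserted to follow from ``an index-counting parity formula'' for the fold curves of $F$, but no such formula is stated or derived; since $\sigma(M) \bmod 2$ is a Kervaire-type semicharacteristic and is not an oriented cobordism invariant of $M$ alone, this step cannot be waved through, and in \cite{ikegami} it requires a substantial argument. (ii) For injectivity, ``eliminate the remaining fold singularities over the interior'' conceals the actual content: cusp elimination is itself obstructed in general (by a parity condition on the number of cusps tied to the topology of the cobounding $(m+1)$-manifold $W_0$, which you have not arranged to be satisfiable), and the subsequent pairing and cancellation of fold components of each index requires a surgery argument; the claim that the assumed vanishing of the invariants is precisely what makes these cancellations possible \emph{is} the theorem, not a proof of it. So the proposal is a correct road map, consistent with \cite{ikegami}, \cite{ikegamisaeki} and \cite{saeki3}, but the two hard steps remain to be supplied.
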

 
In this paper, as another algebraic topological study, we try to define a product of elements in these modules and through Theorems \ref{thm:3} and \ref{thm:4}, we explicitly show that a natural method fails.  

Note that such studies were also demonstrated in \cite{kitazawa2} for cobordism-like modules introduced in \cite{kitazawa} and structures were explicitly successfully obtained.

Such a module is defined by regarding two manifolds appearing in a same connected component or adjacent connected components of the set of all regular values of a generic smooth map compatible with simplicial structures of the source and the target manifolds. Such modules have been introduced by the motivation of generalizing a theorem on the topology of {\it Reeb spaces} of maps, defined as the quotient spaces of the source manifolds consisting of all connected components, in considerable cases, being polyhedra of dimensions equal to the dimensions of the target manifolds, inheriting topological information of the source manifolds, and in general being fundamental tools in the theory of global singularity. See \cite{hiratukasaeki}, \cite{hiratukasaeki2} and \cite{kitazawa} for related studies and see \cite{reeb} etc. for fundamental explanations of Reeb spaces.

Throughout the present paper, manifolds and maps between them are smooth and of class $C^{\infty}$ unless otherwise stated. The {\it singular set} of a smooth map is defined as the set of all singular points of the map, the {\it singular value set} of the map is defined as
 the image of the singular set.

The author is a member of the project Grant-in-Aid for Scientific Research (S) (17H06128 Principal Investigator: Osamu Saeki)
"Innovative research of geometric topology and singularities of differentiable mappings"

(https://kaken.nii.ac.jp/en/grant/KAKENHI-PROJECT-17H06128/)
 and supported by the project.
\section{The main theorem}
\label{sec:2}

\begin{figure}
\includegraphics[width=50mm]{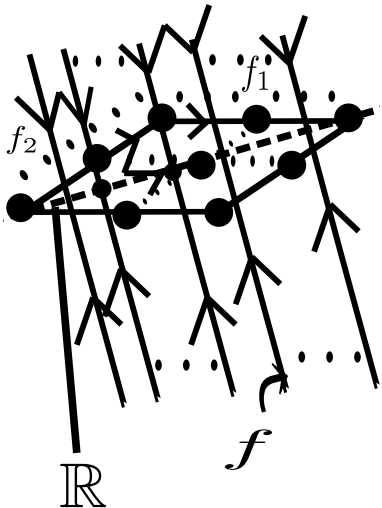}

\caption{Given maps $f_1$ and $f_2$ and their singular values and the map $f$ and its singular values (singular values are represented by dots where for $f$, only some of singular values are represented).}
\label{fig:1}
\end{figure}
\begin{Lem}
\label{lem:1}
Let $f_i:M_i \rightarrow \mathbb{R}$ be a Morse function on a closed manifold $M_i$ of dimension $m_i>0$ for $i=1,2$. Let $F:M \rightarrow {\mathbb{R}}^2$ be a map on the manifold $M=M_1 \times M_2$ defined as {\rm (}the composition of{\rm )} the product $f_1 \times f_2$ of the functions {\rm (}resp. and an affine transformation of the plane{\rm )} and $f$ be the map obtained by composing $F$ and the projection to the straight line including the diagonal canonically oriented and regarded as $\mathbb{R}$. Then $f$ is Morse and a point $p=(p_1,p_2) \in M$ is singular if and only
 if $f_i$ is singular at $p_i$ for $i=1,2$ and the index of the singular point $p$ is regarded as the sum of the index of $p_1$ and that of $p_2$.
\end{Lem}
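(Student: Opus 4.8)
The plan is to reduce the statement to the classical fact that a positive linear combination of two Morse functions on a product of manifolds is again Morse, with singular points and indices behaving additively. First I would identify $f$ explicitly. The projection to the oriented diagonal line sends $(u,v) \in \mathbb{R}^2$ to a positive multiple of $u+v$, and precomposing with the affine transformation of the plane only replaces this by an affine functional $(u,v) \mapsto a u + b v + c$; the point is that the admissible transformations are exactly those keeping both coefficients $a$ and $b$ positive (without the affine transformation one has simply $a=b>0$). Hence, writing points of $M$ as $p=(p_1,p_2)$ and using the canonical splitting $T_p M = T_{p_1}M_1 \oplus T_{p_2}M_2$, we have
$$ f(p_1,p_2) = a\,f_1(p_1) + b\,f_2(p_2) + c, \qquad a,b>0. $$
Since an additive constant and positive rescalings of each factor affect neither the nondegeneracy of a critical point nor its Morse index, it suffices to analyse $a f_1 + b f_2$.

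Next I would compute the differential. Under the tangent splitting one has $(df)_p = a\,(df_1)_{p_1} \oplus b\,(df_2)_{p_2}$, so $(df)_p = 0$ exactly when $(df_1)_{p_1}=0$ and $(df_2)_{p_2}=0$ (using $a,b \neq 0$). This yields the asserted description of the singular set: $p=(p_1,p_2)$ is a singular point of $f$ if and only if $p_1$ is a singular point of $f_1$ and $p_2$ is a singular point of $f_2$; equivalently the singular set of $f$ is the product of the singular sets of $f_1$ and $f_2$.

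For nondegeneracy and the index I would pass to Morse charts. By the Morse Lemma, choose coordinates centred at $p_1$ in which $f_1 - f_1(p_1)$ is a nondegenerate diagonal quadratic form with exactly $\lambda_1 = i(p_1)$ negative squares, and similarly coordinates at $p_2$ with $\lambda_2 = i(p_2)$ negative squares. The product of these two charts is a chart on $M$ near $p$, and in it $f$ takes the form $c' + a Q_1 + b Q_2$, where $Q_i$ is the standard quadratic form on the $i$-th block of coordinates. Its Hessian is the block-diagonal matrix built from $a\,\mathrm{Hess}(f_1)_{p_1}$ and $b\,\mathrm{Hess}(f_2)_{p_2}$; since each block is nondegenerate and $a,b\neq 0$, this Hessian is nondegenerate, so every critical point of $f$ is nondegenerate and $f$ is Morse. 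Counting negative eigenvalues block by block, and using $a,b>0$ so that multiplication by $a$ or $b$ preserves the sign of every eigenvalue, the number of negative squares of $f$ at $p$ equals $\lambda_1+\lambda_2$, whence $i(p)=i(p_1)+i(p_2)$.

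The computation is essentially routine; the only step demanding genuine care is the bookkeeping around the affine transformation. I would make explicit that the diagonal projection forces a common positive value $a=b$, and that the permitted affine transformations are precisely those preserving the positivity of both $a$ and $b$ — this positivity is exactly what guarantees that the indices add, rather than one of them being replaced by its complement $m_i-\lambda_i$. If in addition one wants $f$ to have pairwise distinct critical values (a strict Morse function), I would remark that a generic admissible transformation, for instance one with $a/b$ irrational, separates the finitely many values $a f_1(p_1)+b f_2(p_2)$; this is the role played by the optional affine transformation in the statement.
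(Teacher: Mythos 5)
Your proposal is correct and follows essentially the same route as the paper's own proof: identify the singular set of $f$ as the product of the singular sets of $f_1$ and $f_2$, observe that the Hessian at such a point is block diagonal with blocks the Hessians of $f_1$ and $f_2$ (up to positive scalars), and conclude nondegeneracy and additivity of the indices. Your additional bookkeeping --- making explicit that the projection and any admissible affine transformation yield $f = a f_1 + b f_2 + c$ with $a,b>0$, and that this positivity is what prevents an index from being replaced by its complement $m_i - \lambda_i$ --- is a welcome precision that the paper leaves implicit (it appeals to a figure instead), but it does not change the argument.
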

\begin{proof}
By the definition and FIGURE \ref{fig:1}, we can see that a point on $M$ is a singular point of $f$ if and only if $f_i$ is singular at $p_i$ for $i=1,2$.
For the function $f$, at each singular point, we can regard the Hesse matrix as
$\left(\begin{array}{cc} H_{f_1} & O \\ O & H_{f_2} \end{array} \right)$
where we denote by $H_{f_i}$ the Hesse matrix of the function $f_i$ and by $O$ a zero matrix. The index of the Hesse matrix of $f$ at the singular point, equal to the index of the singular point, is the sum of the indices of the two Hesse matrices, equal to the indices of the singular points of the functions. This completes the proof.
\end{proof}
\begin{Def}
A Morse function $f$ obtained from two Morse functions $f_1$ and $f_2$ by a procedure as in Lemma \ref{lem:1} is said to be 
a {\it diagonal} Morse function obtained from $(f_1,f_2)$.   
\end{Def}
Note that such projections are fundamental and important in the global singularity theory in several explicit situations. For example, Fukuda's formulae in \cite{fukuda} etc., showing relations between Euler numbers of singular sets and the source manifolds of maps explicitly, have been obtained by using such projections and recently in \cite{wrazidlo}, Wrazidlo has obtained a Morse function with just $2$ singular points by composing a canonical projection with a fold map into an Euclidean space whose singular set is diffeomorphic to a standard sphere such that the singular value set is an embedded sphere or {\it standard special generic} map having additional symmetry and restricting the differentiable structure of the source manifold, homeomorphic to a standard sphere, to
 be diffeomorphic to the standard sphere. 

In such a stream, by using Lemma \ref{lem:1}, we have two theorems.
\begin{Thm}
\label{thm:3}
Let $f_i:M_i \rightarrow \mathbb{R}$ be a Morse function on a closed manifold $M_i$ of dimension $m_i>0$ for $i=1,2$ and assume that the relation $m_1 \leq m_2$ holds.
Then for a diagonal Morse function $f$ obtained from $(f_1,f_2)$ and for an integer $0 \leq j < \lfloor m_1 \rfloor$, 
we have \\
 
 ${\phi}_{m_1+m_2-j}(f)={\sum}_{j_1=0}^{j} C_{m_1-j+j_1}(f_1)C_{m_2-j_1}(f_2)-{\sum}_{j_1=0}^{j} C_{j_1}(f_1)C_{j-j_1}(f_2)$.
\end{Thm}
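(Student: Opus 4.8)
The plan is to reduce everything to the convolution formula that Lemma \ref{lem:1} forces on the index counts. Since $\dim M = m_1 + m_2 =: m$ and a singular point $p = (p_1, p_2)$ of $f$ has index equal to the sum of the indices of $p_1$ and $p_2$, counting singular points of $f$ of a prescribed index $\ell$ amounts to summing over all ways of splitting $\ell$ into an index of $f_1$ and an index of $f_2$. Concretely I would first establish
$$C_{\ell}(f) = \sum_{k} C_{k}(f_1)\,C_{\ell - k}(f_2),$$
where $k$ runs over the integers satisfying $0 \le k \le m_1$ and $0 \le \ell - k \le m_2$. This is immediate from the ``if and only if'' characterization of singular points together with the additivity of indices, both supplied by Lemma \ref{lem:1}; in particular the affine transformation allowed in the construction plays no role here, as it has already been absorbed into that lemma.

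Next I would unfold the definition $\phi_{m-j}(f) = C_{m-j}(f) - C_{j}(f)$, so that the target quantity $\phi_{m_1 + m_2 - j}(f)$ splits as $C_{m_1+m_2-j}(f) - C_j(f)$, and then apply the convolution formula to each term separately. The two halves of the claimed identity appear after a change of summation variable: for $C_{m_1 + m_2 - j}(f)$ I would set $k = m_1 - j + j_1$, which turns the $f_1$-factor into $C_{m_1 - j + j_1}(f_1)$ and the $f_2$-factor into $C_{m_2 - j_1}(f_2)$; for $C_j(f)$ the natural index is simply $j_1 = k$, giving $C_{j_1}(f_1)\,C_{j - j_1}(f_2)$. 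Both reindexings reproduce the displayed summands precisely.

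The one place requiring care --- and the only step beyond bookkeeping --- is verifying that both sums range over exactly $0 \le j_1 \le j$, neither truncated nor extended. This is where the hypotheses $0 \le j < m_1 \le m_2$ are essential. For $C_j(f)$ the admissibility condition $0 \le j - k \le m_2$ is automatic once $0 \le k \le j$, because $j < m_1 \le m_2$ keeps $j - k$ within range and the constraint $k \le m_1$ is never binding. For $C_{m_1+m_2-j}(f)$ the condition $0 \le (m_1+m_2-j) - k \le m_2$ forces $m_1 - j \le k \le m_1$, and again $j < m_1 \le m_2$ guarantees that these are the effective bounds: the competing upper bound $k \le m_1 + m_2 - j$ is loose and the lower bound $m_1 - j$ stays strictly positive. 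Once these two range identifications are confirmed, subtracting the resulting convolutions yields the asserted formula. I expect no further analytic input, since all the geometric content has already been concentrated into Lemma \ref{lem:1}, leaving only a careful accounting of index ranges.
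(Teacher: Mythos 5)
Your proposal is correct and follows essentially the same route as the paper: the paper's proof simply invokes Lemma \ref{lem:1} to write ${\phi}_{m_1+m_2-j}(f)=C_{m_1+m_2-j}(f)-C_{j}(f)$ and then states the two convolution sums directly. Your version supplies the details the paper leaves implicit, in particular the careful verification that the hypotheses $0 \leq j < m_1 \leq m_2$ make both convolutions range over exactly $0 \leq j_1 \leq j$, which is indeed the only nontrivial bookkeeping in the argument.
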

\begin{proof}[The proof of Theorem \ref{thm:3}]
By Lemma \ref{lem:1} and the relations $m_1 \leq m_2$ and $0 \leq j < \lfloor m_1 \rfloor$, we have
\begin{align*}
{\phi}_{m_1+m_2-j}(f) &=c_{m_1+m_2-j}(f)-C_{j}(f) \\
&={\sum}_{j_1=0}^{j} C_{m_1-j+j_1}(f_1)C_{m_2-j_1}(f_2)-{\sum}_{j_1=0}^{j} C_{j_1}(f_1)C_{j-j_1}(f_2)
\end{align*}
and this completes the proof.
\end{proof}
\begin{Thm}
\label{thm:4}
Let $m$ and $m^{\prime}$ be positive integers. For any function $f$ on a closed {\rm (}oriented{\rm )} $m$-dimensional manifold
 satisfying the relation ${\phi}_{m}(f) \neq 0$ and any class of ${\mathcal{N}}_{\mathcal{M},m^{\prime}}$ {\rm (}resp. ${\Omega}_{\mathcal{M},m^{\prime}}${\rm )} such that for a function $f^{\prime}$, there exists an infinite family $\{f_k\}_{k \in \mathbb{N} \sqcup \{0\}}$ of functions in the class such that resulting diagonal Morse functions ${f^{\prime}}_{k_1}$ and ${f^{\prime}}_{k_2}$, obtained from $(f,f_{k_1})$ and $(f,f_{k_2})$, respectively, are in distinct classes of ${\mathcal{N}}_{\mathcal{M},m+m^{\prime}}$ {\rm (}resp. ${\Omega}_{\mathcal{M},m+m^{\prime}}${\rm )} for distinct functions $f_{k_1}$ and $f_{k_2}$. 
\end{Thm}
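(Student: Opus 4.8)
The plan is to expose the failure of well-definedness by tracking a single top-dimensional cobordism invariant of the diagonal Morse function and showing that it ranges without bound over representatives of one fixed class. By Theorem \ref{thm:1} (resp. Theorem \ref{thm:2}), the class of a Morse function in ${\mathcal{N}}_{\mathcal{M},m+m^{\prime}}$ (resp. ${\Omega}_{\mathcal{M},m+m^{\prime}}$) is detected, among the other data, by the numbers ${\phi}_j$ for $j \geq \lfloor \frac{(m+m^{\prime})+3}{2} \rfloor$; since $m+m^{\prime} \geq 2$, the top value ${\phi}_{m+m^{\prime}}$ always lies above this threshold and is therefore one of the detecting invariants. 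So it suffices to produce, inside one fixed class of ${\mathcal{N}}_{\mathcal{M},m^{\prime}}$ (resp. ${\Omega}_{\mathcal{M},m^{\prime}}$), infinitely many functions $f_k$ whose diagonal products with $f$ realize infinitely many distinct values of ${\phi}_{m+m^{\prime}}$.

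First I would record the top invariant explicitly. By Lemma \ref{lem:1} (equivalently the case $j=0$ of Theorem \ref{thm:3}), the index-$(m+m^{\prime})$ and index-$0$ singular points of a diagonal Morse function $f^{\prime}$ obtained from $(f,f_k)$ are precisely the products of the top-index, respectively bottom-index, singular points of the two factors, so
\begin{align*}
{\phi}_{m+m^{\prime}}(f^{\prime}) &= C_{m+m^{\prime}}(f^{\prime})-C_0(f^{\prime}) \\
&= C_m(f)\,C_{m^{\prime}}(f_k)-C_0(f)\,C_0(f_k).
\end{align*}
Next I would build the family by a handle-trivial modification that fixes the class of $f_k$ while increasing $C_0$ and $C_{m^{\prime}}$ in lockstep. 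Let $h:S^{m^{\prime}} \rightarrow \mathbb{R}$ be a standard height function with exactly two singular points, of indices $0$ and $m^{\prime}$, and set $f_k := f_0 \sqcup h^{\sqcup k}$ for a fixed representative $f_0$ of the given class. Since $S^{m^{\prime}}$ bounds a disk it is null-cobordant in ${\mathcal{N}}_{m^{\prime}}$ (resp. ${\Omega}_{m^{\prime}}$), and since $h$ has $C_0(h)=C_{m^{\prime}}(h)=1$ and all other $C_j(h)=0$ we get ${\phi}_j(h)=0$ for every $j$; hence ${\phi}_j(f_k)={\phi}_j(f_0)$ for all $j$ and the source manifold keeps its cobordism class. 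By Theorem \ref{thm:1} (resp. Theorem \ref{thm:2}) every $f_k$ therefore lies in the same class as $f_0$, while $C_0(f_k)=C_0(f_0)+k$ and $C_{m^{\prime}}(f_k)=C_{m^{\prime}}(f_0)+k$.

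Substituting into the displayed identity gives
\begin{align*}
{\phi}_{m+m^{\prime}}(f^{\prime}_k) &= C_m(f)\bigl(C_{m^{\prime}}(f_0)+k\bigr)-C_0(f)\bigl(C_0(f_0)+k\bigr) \\
&= \bigl(C_m(f)-C_0(f)\bigr)k + \bigl(C_m(f)C_{m^{\prime}}(f_0)-C_0(f)C_0(f_0)\bigr) \\
&= {\phi}_m(f)\,k + \text{(constant)}.
\end{align*}
Here the hypothesis ${\phi}_m(f) \neq 0$ is exactly what forces the slope to be nonzero, so the values ${\phi}_{m+m^{\prime}}(f^{\prime}_k)$ are pairwise distinct, whence the classes of the $f^{\prime}_k$ in ${\mathcal{N}}_{\mathcal{M},m+m^{\prime}}$ (resp. ${\Omega}_{\mathcal{M},m+m^{\prime}}$) are pairwise distinct. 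I expect the real content to be the choice itself, namely pairing the invariant ${\phi}_{m+m^{\prime}}$ with a class-preserving modification that nonetheless moves it; the remaining delicate points are mere bookkeeping, namely confirming that ${\phi}_{m+m^{\prime}}$ sits above the detection threshold in every parity case of Theorems \ref{thm:1} and \ref{thm:2} (including when $m+m^{\prime} \equiv 1 \pmod 4$, where an extra ${\mathbb{Z}}/2{\mathbb{Z}}$ summand appears) and checking that the diagonal construction is insensitive to the auxiliary affine transformation, so that each ${\phi}_{m+m^{\prime}}(f^{\prime}_k)$ is genuinely an invariant of the class. Both follow at once from Lemma \ref{lem:1}.
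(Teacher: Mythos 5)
Your argument has the same skeleton as the paper's proof: the identity ${\phi}_{m+m^{\prime}}({f^{\prime}}_k) = C_m(f)C_{m^{\prime}}(f_k) - C_0(f)C_0(f_k)$ from the $j=0$ case of Theorem \ref{thm:3}, a family inside one fixed class with $C_0$ and $C_{m^{\prime}}$ both growing linearly in $k$, and the observation that the resulting value ${\phi}_m(f)\,k + \mathrm{const}$ has nonzero slope. Where you differ is the construction of the family: the paper keeps the source manifold literally fixed and inserts $k$ cancelling pairs of handles for every adjacent pair of indices (so that the manifold and every invariant attached to it are unchanged by fiat), whereas you take disjoint unions with copies of the height function $h$ on $S^{m^{\prime}}$.

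That substitution creates the one genuine gap. In the oriented case with $m^{\prime} \equiv 1 \pmod 4$, Theorem \ref{thm:2} carries the extra $\mathbb{Z}/2\mathbb{Z}$ invariant built from $\sigma(M) = \sum_{j=0}^{2l}\mathrm{rank}\, H_j(M;\mathbb{Q})$, and $\sigma$ is \emph{not} determined by the cobordism class of the source manifold: $\sigma(S^{4l+1}) = 1$ even though the sphere is null-cobordant, so each disjoint sphere you attach shifts $\sigma$ by $1$. With the invariant as written in Theorem \ref{thm:2} (where the correction term is a sum of ${\phi}_j$'s, all of which your $h$ leaves at $0$), the $\mathbb{Z}/2\mathbb{Z}$ invariant of $f_k$ changes by $k \bmod 2$, so your claim that ``every $f_k$ lies in the same class as $f_0$'' fails for odd $k$. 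Your closing paragraph worries about the extra $\mathbb{Z}/2\mathbb{Z}$ summand only where it is harmless (distinguishing the products ${f^{\prime}}_k$, for which a single differing invariant suffices) and not where it bites (keeping the $f_k$ in a single class, for which all invariants must agree). The repair is cheap --- restrict to even $k$, or adjoin the spheres in pairs --- and the rest of the argument, including the threshold check $\lfloor \frac{m+m^{\prime}+3}{2} \rfloor \leq m+m^{\prime}$ and the distinctness conclusion, is correct; but as written the oriented $m^{\prime} \equiv 1 \pmod 4$ case is not covered. This is precisely the case for which the paper adds its extra cancelling handle pair, and it sidesteps the $\sigma$ issue entirely by never changing the source manifold.
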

\begin{proof}
We apply theorem \ref{thm:3} in the case where $j=0$ holds and for Morse functions $f$ and $f_k$, on closed manifolds of dimensions $m$ and $m^{\prime}$, respectively, we have
${\phi}_{m+m^{\prime}}({f^{\prime}}_k)=C_{m}(f)C_{m^{\prime}}(f_k)-C_{0}(f)C_{0}(f_k)$ for the resulting diagonal Morse function ${f^{\prime}}_{k}$ obtained from $(f,f_{k})$. As the family $\{f_k\}$, we construct the maps so that ${\phi}_{m^{\prime}}(f_k)>0$ is invariant for any integer $k \geq 0$ and that for distinct functions $f_{k_1}$ and $f_{k_2}$, the relation $C_0(f_{k_1}) \neq C_0(f_{k_2})$ holds. We can do this by applying a fundamental relation between singular points of a Morse function and handles in a suitable handle decomposition of the source manifold. More precisely, we take a function representing the class and for any positive integer $k>0$, add just $k$ cancelling pairs of $j$-handles, corresponding to singular points whose indices are $j$, and ($j+1$)-handles, corresponding to singular points whose indices are $j+1$, for each $0 \leq j \leq m^{\prime}-1$ and in addition, only in the case where $m^{\prime}$ is represented as $4l+1$ for an integer $l$, one cancelling pair of a $\lfloor \frac{m^{\prime}}{2} \rfloor$-handle and a ($\lfloor \frac{m^{\prime}}{2} \rfloor+1$)-handle, to the source manifold and the function without changing the source manifold. As a result, on the original manifold, we obtain the new function $f_k$ with $2km^{\prime}$ singular points added and $k$ singular points whose indices are $0$ and $m^{\prime}$ added. Theorems \ref{thm:1} and \ref{thm:2} show that the class each $f_k$ belongs to is invariant by such an operation and using the obtained formula ${\phi}_{m+m^{\prime}}({f^{\prime}}_k)=C_{m}(f)C_{m^{\prime}}(f_k)-C_{0}(f)C_{0}(f_k)=C_{m}(f)(C_{m^{\prime}}(f^{\prime})+k)-C_{0}(f)(C_{0}(f^{\prime})+k)$ and the assumption ${\phi}_{m}(f)=C_{m}(f)-C_{0}(f) \neq 0$ in the beginning together with these theorems leads us to the desired result. 
\end{proof}
Theorem \ref{thm:4} means that the class a diagonal Morse function belongs to is not invariant even though the classes original two functions belong to are invariant. We cannot define the class a resulting diagonal Morse function belongs to as the product of the original two classes.

%
%

\end{document}